\theoremstyle{plain}
  \declaretheorem[numberwithin=section]{theorem}
  \declaretheorem[numberlike=theorem]{corollary}
  \declaretheorem[numberlike=theorem]{proposition}
\theoremstyle{definition}
  \declaretheorem[numberlike=theorem]{example}
\newcommand{\sgn}{\operatorname{sgn}}
\newcommand{\mathd}{\mathrm{d}}
\newcommand{\assign}{:=}
\newcommand{\um}{-}
\newcommand{\subjclass}[2][1991]{%
  \let\@oldtitle\@title%
  \gdef\@title{\@oldtitle\footnotetext{#1 \emph{Mathematics subject classification.} #2}}%
}
\newcommand{\keywords}[1]{%
  \let\@@oldtitle\@title%
  \gdef\@title{\@@oldtitle\footnotetext{\emph{Key words and phrases.} #1.}}%
}
\begin{document}

\title{Bounds for the logarithm of the Euler gamma function and its derivatives}

\author{Harold G.~Diamond\thanks{hdiamond@illinois.edu}{}\; and Armin Straub\thanks{astraub@illinois.edu}}
\affil{Department of Mathematics\\ University of Illinois at Urbana-Champaign\\ \small 1409 W. Green St, Urbana, IL 61801, United States}

\date{August 13, 2015}

\subjclass[2010]{Primary 33B15, 26D07; Secondary 11B68}
\keywords{Gamma function, psi function, inequalities, asymptotic
  expansions, Bernoulli polynomials, complete monotonicity}

\maketitle

\begin{abstract}
  We consider differences between $\log \Gamma(x)$ and truncations of
  certain classical asymptotic expansions in inverse powers of
  $x-\lambda$ whose coefficients are expressed in terms of Bernoulli
  polynomials $B_n(\lambda)$, and we obtain conditions under which
  these differences are strictly completely monotonic.  In the
  symmetric cases $\lambda=0$ and $\lambda=1/2$, we recover results of
  Sonin, N\"orlund and Alzer.  Also we show how to derive these
  asymptotic expansions using the functional equation of the
  logarithmic derivative of the Euler gamma function, the
  representation of $1/x$ as a difference $F(x+1)-F(x)$, and a
  backward induction.
\end{abstract}

\section{Introduction} \label{sec:intro}

The Euler gamma function
$\Gamma(x)$ is widely regarded as the most important special function.
Good accounts and many formulas for $\Gamma(x)$ can be found in
\cite{Ar}, \cite[\S6.3]{AS}, \cite[Chapter~1]{EMOT}, and \cite[Chapter~12]{WW}.
Various bounds are known for $\Gamma(x)$ and $\psi(x)$, the logarithmic
derivative of $\Gamma(x)$,  also called the \emph{digamma function}, e.g.
\cite{Al1}, \cite{Al2}, \cite[6.3.21]{AS}, \cite{Go},
\cite[Appendix~C]{MV}, \cite{Mo}.
The logarithm of the gamma function has the classical asymptotic expansion
\begin{align}\label{eq:logG:asy}
  \log \Gamma ( x) & \sim \log \sqrt{2 \pi} + \left( x - \tfrac{1}{2}
  \right) \log ( x - \lambda) - ( x - \lambda)\\
  &\qquad + \sum_{n = 2}^{\infty} ( - 1)^n \frac{B_n ( \lambda)}{n ( n - 1)} ( x
  - \lambda)^{1 - n},\nonumber
\end{align}
which we consider for real $x>\lambda$ and $\lambda\in[0,1]$.
As usual, $B_n(\lambda)$ denotes the $n$th Bernoulli function.
Our focus here is on differences (and their derivatives) between
$\log\Gamma(x)$ and the truncations \begin{align*}
  L_N ( \lambda ; x) & := \log \sqrt{2 \pi} + \left( x - \tfrac{1}{2} \right)
  \log ( x - \lambda) - ( x - \lambda)\\
  &\qquad + \sum_{n = 2}^N ( - 1)^n \frac{B_n ( \lambda)}{n ( n - 1)} ( x -
  \lambda)^{1 - n}
\end{align*}
of this asymptotic series. In the most basic case, we are interested in
conditions under which these truncations provide strict upper or lower bounds.
These inequalities together with the functional equations provide an
effective method for calculating $\log\Gamma(x)$ and its derivatives with arbitrary accuracy.

\begin{example}\label{eg:psi:intro}
  A pleasant property of \eqref{eq:logG:asy} is that we can differentiate both
  sides to obtain asymptotic expansions for the derivatives of $\log\Gamma(x)$,
  that is, the derivatives of the truncations $L_N(\lambda;x)$ provide asymptotic
  expansions for the derivatives of $\log\Gamma(x)$.
  For instance, in the particularly symmetric case $\lambda=1/2$, we have the
  asymptotic expansion
  \begin{equation}\label{eq:psi:asy2}
    \psi(x) \sim \log(x-1/2) - \sum_{n=1}^\infty
    \frac{B_{2n}(1/2)}{2n}\,(x-1/2)^{-2n}.
  \end{equation}
  Our main theorem implies the following result of N\"orlund \cite[\S 56,
  p.~106]{No}: if the series in \eqref{eq:psi:asy2} is truncated at $n=N$
  with $N \ge 0$ and even, the resulting sum is a lower bound for $\psi(x)$
  valid for all $x > 1/2$; if $N$ is odd, it is an upper bound.
  In other words, successive truncations of \eqref{eq:psi:asy2} provide
  lower and upper bounds valid for all $x > 1/2$.
  The first examples of these bounds are
  \begin{equation}   \label{eq:ex-1}
  \psi(x) > \log (x-1/2)   
  \end{equation}
  and
  \begin{equation}   \label{eq:ex-2} 
  \psi(x) < \log (x-1/2) + (x-1/2)^{-2}/24\,.
  \end{equation}
\end{example}

A function $f$ is said to be {\emph{completely monotonic}} on an
interval or half-line $I$ if, for all $x \in I$ and all integers $n
\geq 0$, the derivative $f^{( n)} ( x)$ exists and satisfies $( - 1)^n
f^{( n)} ( x) \geq 0$. If this inequality is strict, that is, if $( -
1)^n f^{( n)} ( x) > 0$, then $f$ is called {\emph{strictly
completely monotonic}} on $I$. For an introduction to completely
monotonic functions and their properties we refer to \cite[Chapter~IV]{Wi}.

Our main result, which is proved in Section~\ref{sec:scm}, is the following.
As indicated in Example~\ref{eg:alzer}, this is an extension of a result of
Alzer \cite[Theorem~8]{Al1}, which concerns the case $\lambda = 0$.

\begin{theorem}\label{thm:main}
  Let $N \geq 1$. Suppose that either $N$ is even and $\lambda \in [ 0,
  \lambda_0]$ where $\lambda_0$ is the unique root of $B_N ( \lambda)$ in $[
  0, 1 / 2]$, or $N$ is odd and $\lambda \in [ \lambda_0, 1 / 2]$ where
  $\lambda_0$ is the unique root of $B_{N + 1} ( \lambda)$ in $[ 0, 1 / 2]$.
  Then,
  \begin{equation*}
    ( - 1)^{\lceil N / 2 \rceil} ( \log \Gamma ( x) - L_N ( \lambda ; x))
  \end{equation*}
  is strictly completely monotonic on $( \lambda, \infty)$.
\end{theorem}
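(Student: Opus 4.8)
The plan is to convert the statement into a pointwise sign condition on a Laplace kernel and then to analyze that sign. First I would record the integral representation behind the expansion \eqref{eq:logG:asy}. Starting from Gauss's formula $\psi(x)=\int_0^\infty(e^{-t}/t - e^{-xt}/(1-e^{-t}))\,\mathd t$ and Frullani's integral for $\log(x-\lambda)$, a short computation gives
\[
  \psi(x) = \log(x-\lambda) + \int_0^\infty e^{-(x-\lambda)t}\,h_\lambda(t)\,\mathd t,\qquad h_\lambda(t) = \frac1t - \frac{e^{-\lambda t}}{1-e^{-t}} .
\]
Since $e^{-\lambda t}/(1-e^{-t}) = t^{-1}\sum_{n\ge0}(-1)^nB_n(\lambda)t^n/n!$ by the generating function for Bernoulli polynomials and the reflection $B_n(1-\lambda)=(-1)^nB_n(\lambda)$, the function $h_\lambda$ has Maclaurin series $\sum_{n\ge1}(-1)^{n-1}B_n(\lambda)t^{n-1}/n!$, and Watson's lemma recovers exactly $\psi\sim\partial_xL_N$. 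Writing $R_N(t)$ for the Taylor remainder of $h_\lambda$ past degree $N-1$, term-by-term Laplace inversion yields $\psi(x)-\partial_xL_N(\lambda;x)=\int_0^\infty e^{-(x-\lambda)t}R_N(t)\,\mathd t$; integrating in $x$ from $x$ to $\infty$ (both sides vanishing there) gives the key representation
\[
  \log\Gamma(x)-L_N(\lambda;x) = -\int_0^\infty \frac{R_N(t)}{t}\,e^{-(x-\lambda)t}\,\mathd t .
\]

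Next I would reduce the theorem to a sign. For fixed $t>0$ the function $x\mapsto e^{-(x-\lambda)t}$ is completely monotonic on $(\lambda,\infty)$, with $(-1)^k\partial_x^k e^{-(x-\lambda)t}=t^k e^{-(x-\lambda)t}$. Differentiating under the integral therefore shows that
\[
  (-1)^{\lceil N/2\rceil}\bigl(\log\Gamma(x)-L_N(\lambda;x)\bigr) = \int_0^\infty \Bigl[(-1)^{\lceil N/2\rceil+1}\tfrac{R_N(t)}{t}\Bigr]\,e^{-(x-\lambda)t}\,\mathd t
\]
is completely monotonic as soon as the bracketed kernel is nonnegative on $(0,\infty)$, and strictly so once it is positive there (this is the easy direction of Bernstein's theorem, strictness holding because the representing measure is then not a point mass). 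Thus everything comes down to proving $(-1)^{\lceil N/2\rceil+1}R_N(t)>0$ for all $t>0$.

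I would then pin down the two endpoint signs of $R_N$. As $t\to0^+$ the leading term is the first omitted one, $R_N(t)\sim(-1)^N B_{N+1}(\lambda)t^N/(N+1)!$; as $t\to\infty$, $h_\lambda(t)$ stays bounded while the polynomial part dominates, so $R_N(t)\sim(-1)^N B_N(\lambda)t^{N-1}/N!$. Hence the required sign forces $\sigma B_N(\lambda)\ge0$ and $\sigma B_{N+1}(\lambda)\ge0$ with $\sigma:=(-1)^{\lceil N/2\rceil+1+N}$. Using the classical sign pattern of Bernoulli polynomials on $[0,1/2]$ — $B_{2m}$ changes sign once there (at its root), while $B_{2m+1}$ keeps the constant sign of its derivative at $0$ and vanishes only at the endpoints — one checks that these two demands are met by exactly the stated $\lambda$-ranges: for $N$ even the binding constraint is the large-$t$ one, governed by $B_N$, giving $\lambda\in[0,\lambda_0]$ with $B_N(\lambda_0)=0$, while the $B_{N+1}$ condition is automatic; for $N$ odd the roles reverse, the small-$t$ constraint governed by $B_{N+1}$ being binding and giving $\lambda\in[\lambda_0,1/2]$ with $B_{N+1}(\lambda_0)=0$.

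The main obstacle is upgrading these two matched endpoint signs to single-signedness of $R_N$ on the whole ray $(0,\infty)$: ruling out interior sign changes is where the real work lies, since a Lagrange-type argument fails (the relevant derivatives of $h_\lambda$ do change sign). I would attack this either by producing a manifestly single-signed representation of $R_N$ — for instance an Euler–Maclaurin remainder with a periodic Bernoulli function, exploiting $e^{-\lambda t}/(1-e^{-t})=\sum_{j\ge0}e^{-(\lambda+j)t}$ to compare the integral $1/t=\int_0^\infty e^{-ut}\,\mathd u$ with the offset sum — or by an induction on $N$ built on the recursion $R_{N+1}(t)=R_N(t)-(-1)^N B_{N+1}(\lambda)t^N/(N+1)!$ together with the functional relation $h_\lambda(t)=h_{\lambda+1}(t)-e^{-\lambda t}$ that mirrors $\psi(x+1)-\psi(x)=1/x$. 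In either route the delicate point is precisely the regime where neither endpoint estimate is decisive and the constraint is tightest, namely $\lambda$ near $\lambda_0$; controlling the sign there, uniformly in $t$, is what I expect to be the crux of the argument.
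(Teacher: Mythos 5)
Your reduction is sound as far as it goes: the representation $\psi(x)-F_N(\lambda;x)=\int_0^\infty e^{-(x-\lambda)t}R_N(t)\,\mathd t$ and, after integrating in $x$, $\log\Gamma(x)-L_N(\lambda;x)=-\int_0^\infty t^{-1}R_N(t)\,e^{-(x-\lambda)t}\,\mathd t$ are correct, the easy direction of Bernstein's theorem is correctly applied, and your sign bookkeeping at $t\to0^+$ and $t\to\infty$ does reproduce exactly the hypotheses of Theorem~\ref{thm:main}. But there is a genuine gap at precisely the point you flag yourself: single-signedness of $R_N$ on all of $(0,\infty)$ is established only asymptotically, and the two strategies you propose for the interior (an Euler--Maclaurin comparison of $1/t=\int_0^\infty e^{-ut}\,\mathd u$ with the offset sum $\sum_{j\ge0}e^{-(\lambda+j)t}$, or an induction on $N$) are named but not executed. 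This is not a technical loose end: since your kernel is continuous, uniqueness of Laplace transforms shows that nonnegativity of $(-1)^{\lceil N/2\rceil+1}R_N$ is essentially \emph{equivalent} to the complete monotonicity being proved, so the proposal reduces the theorem to an equivalent unproven statement. The endpoint signs are only necessary conditions and do not rule out an interior sign change; moreover the hypotheses on $\lambda$ are sharp (cf.\ Example~\ref{eg:psi:14}, where $\psi(x)-F_2(1/4;x)$ changes sign), so the positivity is delicate and its proof must do real work exactly in the regime you identify as the crux, $\lambda$ near $\lambda_0$ and intermediate $t$, where your asymptotics give nothing.

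For comparison, the paper never passes to the Laplace ($t$-)domain. It proves the exact remainder $\log\Gamma(x)-L_N(\lambda;x)=-\frac1N\int_{-\lambda}^\infty B_N(\{t\})\,(x+t)^{-N}\,\mathd t$ (Proposition~\ref{prop:loggamma:err}) by iterating the one-step Euler--Maclaurin identity of Proposition~\ref{prop:FN:R}; differentiating that one-step identity $n$ times and running the backward induction reduces strict complete monotonicity to positivity of the finite-interval integrals $\int_{-\lambda}^{1-\lambda}B_N(\{t\})\,(x+t)^{-(N+n)}\,\mathd t$, whose sign is settled in the proof of Theorem~\ref{thm:psi:bounds:x} by two concrete devices: for odd $N$, integration by parts against $B_{N+1}(t)-B_{N+1}(\lambda)$ combined with monotonicity of the algebraic kernel, yielding the lower bound $-B_{N+1}(\lambda)/x^{N+2}\ge0$; for even $N$, comparison of the kernel with its chord through $t=\lambda_0$ and $t=1-\lambda_0$, which matches the sign changes of $B_N(\{t\})$. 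Your first proposed strategy is in fact the Laplace-domain shadow of this argument; to close the gap you would need to carry out analogues of these two sign arguments for $R_N(t)$, uniformly in $t>0$ and for all $\lambda$ up to the critical value $\lambda_0$. Until that is done, the proof is incomplete.
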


We note that $\lambda_0<1/4$ and that $\lambda_0\to1/4$ very rapidly
as $N$ increases.  This is explained in Example~\ref{eg:psi:14}, which also
illustrates that Theorem~\ref{thm:main} would be false without the conditions
on $N$ and $\lambda$.  On the other hand, the restriction to the case
$\lambda\in[0,1/2]$ is made for the sake of exposition, and we invite the
interested reader to adjust the details for the case $\lambda\in[1/2,1]$.

\begin{example}
  The particular case $\lambda = 1 / 2$ and $N = 3$ shows that
  \begin{equation*}
    \log \Gamma ( x) - \log \sqrt{2 \pi} - \left( x - \tfrac{1}{2} \right)
     \left( \log \left( x - \tfrac{1}{2} \right) - 1 \right) + \frac{1 / 24}{x
     - \tfrac{1}{2}}
  \end{equation*}
  is strictly completely monotonic on $( 1 / 2, \infty)$. This generalizes the
  inequality \eqref{eq:ex-2}. 
\end{example}

\begin{example}
  \label{eg:alzer}In the case $\lambda = 0$, we find that the functions $\psi
  ( x) - L_{2 M} ( 0 ; x)$, explicitly given by
  \begin{equation*}
    \psi ( x) - \log \sqrt{2 \pi} - \left( x - \tfrac{1}{2} \right) \log ( x)
     + x - \sum_{n = 1}^M \frac{B_{2 n}}{2 n ( 2 n - 1) x^{2 n - 1}},
  \end{equation*}
  are strictly completely monotonic on $( 0, \infty)$ if $M$ is even.
  Likewise, the functions $L_{2 M} ( 0 ; x) - \psi ( x)$ are strictly
  completely monotonic on $( 0, \infty)$ if $M$ is odd. This is a result of
  Alzer \cite[Theorem~8]{Al1}.
\end{example}

This paper is organized as follows. In Section~\ref{sec:psi}, we give a first
illustration of our approach by proving the claims made in
Example~\ref{eg:psi:intro}.  These bounds for the digamma function are then
generalized in Section~\ref{sec:psi:x}, before we prove the main result in
Section~\ref{sec:scm}.  A very brief indication of how to use these bounds for
numerical calculations is then given in Section~\ref{sec:calc}.

\section{Proving bounds for the digamma function}\label{sec:psi}

In this section, we illustrate our approach by proving the claims made in
Example~\ref{eg:psi:intro}.
In particular, we show, by a method we believe new, how to derive asymptotic
expansions for $\psi(x)$, such as \eqref{eq:psi:asy2}, whose successive
truncations provide lower and upper bounds.

Assuming there is an approximation of the form $\psi(x) \approx F(x)$,
then the functional equation $\psi(x+1) = \psi(x) + 1/x$ would give
\begin{equation} \label{eq:F-relation}
F(x+1) - F(x) \approx 1/x.
\end{equation}
Our method is to find functions $F$ giving upper (resp.~lower)
bounds in \eqref{eq:F-relation} and then show by a ``backward
induction'' that these functions provide bounds for $\psi(x)$.  We use
one additional fact about the digamma function $\psi(x)$, namely the
asymptotic formula 
\begin{equation}  \label{eq:psi-log}
\psi(x) = \log x + o(1), \quad x \to \infty.
\end{equation}
For completeness, we prove this at the end of the article.

\begin{example}\label{eg:lower}
  To illustrate our method, we first obtain the lower bound
  \eqref{eq:ex-1}.  By convexity,
  \[
    \log (x+1/2) - \log (x-1/2) = \int_{x-1/2}^{x+1/2} \frac{\mathd u}{u} > 
    \frac 1x\,. 
  \]
  This inequality and the functional equation give
  \[
    \psi(x) > \psi(x+1) - (\log (x+1/2) - \log(x-1/2) )
  \]
  for all $x>1/2$, or, equivalently,
  \begin{equation}  \label{eq:functional-ineq}
    \psi(x) - \log(x-1/2) > \psi(x+1) - \log (x+1/2).
  \end{equation}

  Now we prove \eqref{eq:ex-1} by an induction: repeating
  \eqref{eq:functional-ineq} with $x$ replaced by $x+1, \, x+2, \,
  \dots$ and using \eqref{eq:psi-log}, we see that
  \[
    \psi(x) - \log(x-1/2) > \liminf_{y \to \infty}(\psi(y)-\log(y-1/2)) = 0.
  \]
\end{example}

The key to our treatment of the general case is to develop a suitable
expression for $1/x$ with the aid of the Euler--Maclaurin sum formula.
The following version of Euler--Maclaurin, given in \cite[Appendix~B]{MV} is
convenient for our application.

\begin{theorem}[Euler--Maclaurin]\label{th:EM}
  Let $K$ be a positive integer,
  and suppose that $f$ is $K$ times continuously differentiable on $[ a, b]$.
  Let $B_k(\cdot)$ denote the $k$-th Bernoulli function.
  Then,
  \begin{align*}
    \sum_{a < n \leq b} f ( n) & = \int_a^b f ( x) \mathd x\\
    &\quad + \sum_{k = 1}^K \frac{( - 1)^k}{k!} ( B_k ( \{ b \}) f^{( k - 1)} (
    b) - B_k ( \{ a \}) f^{( k - 1)} ( a))\\
    &\quad - \frac{( - 1)^K}{K!} \int_a^b B_K ( \{ x \}) f^{( K)} ( x) \mathd x,
  \end{align*}
  where $\{t\} := t - \lfloor t \rfloor$.
\end{theorem}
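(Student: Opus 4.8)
The plan is to prove the formula by induction on $K$, driven by three standard facts about the Bernoulli functions: $B_1(t) = t - \tfrac{1}{2}$, the derivative relation $B_k'(t) = k\,B_{k-1}(t)$, and the fact that the periodized function $B_k(\{x\})$ is continuous across the integers for $k \ge 2$ (because $B_k(0) = B_k(1)$ once $k \ge 2$), whereas $B_1(\{x\})$ jumps by $-1$ at every integer. It is precisely the interplay between these jumps and the half-open summation range $a < n \le b$ that manufactures the left-hand side, so the real content of the argument — and its one genuinely delicate point — lies in bookkeeping at the integer points in the base case.

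For the base case $K = 1$ I would start from the remainder integral $\int_a^b B_1(\{x\}) f'(x)\,\mathd x$ and integrate by parts on each open subinterval delimited by the integers contained in $[a,b]$. On each such piece $B_1(\{x\})$ is smooth with $\frac{\mathd}{\mathd x} B_1(\{x\}) = 1$, so integration by parts replaces that piece by a boundary evaluation minus $\int f$; summing the $-\int f$ contributions rebuilds $-\int_a^b f\,\mathd x$. At each interior integer $n$ the two adjacent boundary evaluations combine as $B_1(1^-) f(n) - B_1(0^+) f(n) = \tfrac{1}{2} f(n) - (-\tfrac{1}{2}) f(n) = f(n)$, so the jumps assemble exactly the sum $\sum_{a < n \le b} f(n)$, while the surviving evaluations at $a$ and $b$ produce the boundary term $-(B_1(\{b\}) f(b) - B_1(\{a\}) f(a))$. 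Rearranging yields the $K = 1$ identity; the endpoint case when $a$ or $b$ is itself an integer is routine and is exactly what fixes the half-open convention in the summation.

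For the induction step I would assume the formula for some $K \ge 1$ and integrate by parts in its remainder $-\frac{(-1)^K}{K!}\int_a^b B_K(\{x\}) f^{(K)}(x)\,\mathd x$, using the antiderivative $B_K(\{x\}) = \frac{1}{K+1}\,\frac{\mathd}{\mathd x} B_{K+1}(\{x\})$. After simplifying $\frac{1}{K!\,(K+1)} = \frac{1}{(K+1)!}$ and the sign $-(-1)^K = (-1)^{K+1}$, this produces exactly the new boundary term $\frac{(-1)^{K+1}}{(K+1)!}(B_{K+1}(\{b\}) f^{(K)}(b) - B_{K+1}(\{a\}) f^{(K)}(a))$ together with the new remainder $-\frac{(-1)^{K+1}}{(K+1)!}\int_a^b B_{K+1}(\{x\}) f^{(K+1)}(x)\,\mathd x$, completing the step. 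The point that makes this clean is that $B_{K+1}(\{x\})$ is continuous at the integers (since $K+1 \ge 2$), so no extra jump contributions arise and a single integration by parts over all of $[a,b]$ is legitimate; this uses $f \in C^{K+1}$, consistent with the hypothesis at each stage. Accordingly, I expect the entire difficulty to be concentrated in the base case — justifying the integration by parts in the presence of the genuine discontinuities of $B_1(\{x\})$ and verifying that the jumps and endpoint terms reproduce $\sum_{a < n \le b} f(n)$ with the correct signs — after which the induction is a purely formal computation.
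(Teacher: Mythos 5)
Your proposal is correct, but note that the paper offers no proof of this theorem at all: it is imported verbatim from Montgomery--Vaughan \cite[Appendix~B]{MV}, so there is nothing internal to compare against. Your argument is the standard (and essentially the referenced) one, and the delicate points are handled properly: in the base case, the interior-integer contributions $B_1(1^-)f(n) - B_1(0^+)f(n) = f(n)$ correctly manufacture the sum, the surviving endpoint evaluations give the $k=1$ boundary term $\tfrac{(-1)^1}{1!}\bigl(B_1(\{b\})f(b) - B_1(\{a\})f(a)\bigr)$ with the right sign, and the endpoint-integer check is indeed what enforces the half-open convention $a < n \le b$. The induction step's sign bookkeeping also checks out: $-\tfrac{(-1)^K}{K!}\cdot\tfrac{1}{K+1} = \tfrac{(-1)^{K+1}}{(K+1)!}$, yielding exactly the $k = K+1$ boundary term and remainder. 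One point deserving an explicit half-sentence in a written version: in the first induction step ($K=1 \to 2$) the derivative identity $\tfrac{\mathd}{\mathd x}B_2(\{x\}) = 2B_1(\{x\})$ fails at the integers themselves, where $B_1(\{x\})$ jumps; integration by parts over all of $[a,b]$ remains valid because $B_2(\{x\})$ is continuous and piecewise $C^1$, hence Lipschitz and absolutely continuous --- your continuity remark is the right observation, but continuity alone is the reason no jump terms appear, while absolute continuity is what licenses the single integration by parts.
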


We now show the following result of N\"orlund \cite[\S 56, p.~106]{No} stated in Example~\ref{eg:psi:intro}.

\begin{theorem}\label{thm:norlund}
  The asymptotic expansion \eqref{eq:psi:asy2} for $\psi(x)$ holds as $x\to\infty$.
  Moreover, if the series in \eqref{eq:psi:asy2} is truncated at $n=N$ with $N
  \ge 0$ and even, the resulting sum is a lower bound for $\psi(x)$ valid for
  all $x > 1/2$; if $N$ is odd, it is an upper bound.
\end{theorem}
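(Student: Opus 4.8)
The plan is to combine the functional-equation and ``backward induction'' device of Example~\ref{eg:lower} with a single exact estimate for the one-step error of the truncations. Put
\[
  S_N(x) \assign \log(x-1/2) - \sum_{n=1}^N \frac{B_{2n}(1/2)}{2n}\,(x-1/2)^{-2n},
  \qquad g_N(x) \assign \psi(x) - S_N(x),
\]
and set $\Delta_N(x) \assign \bigl(S_N(x+1)-S_N(x)\bigr) - 1/x$. Since $\psi(x+1)=\psi(x)+1/x$, a direct computation gives $g_N(x)-g_N(x+1)=\Delta_N(x)$. Thus, exactly as in Example~\ref{eg:lower}, everything reduces to determining the sign of $\Delta_N(x)$ for $x>1/2$: iterating $x\mapsto x+1$ and using \eqref{eq:psi-log} (which forces $g_N(x+m)\to 0$ as $m\to\infty$, because $\log(x+m-1/2)-\log(x+m)\to 0$ and the remaining terms of $S_N$ vanish), we obtain $g_N(x)=\sum_{m\ge 0}\Delta_N(x+m)$, so $g_N$ inherits the sign of $\Delta_N$.

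To find that sign I would rewrite the one-step error as an integral. Writing $\log\tfrac{x+1/2}{x-1/2}=\int_{x-1/2}^{x+1/2}u^{-1}\,\mathd u$ and $(x+1/2)^{-2n}-(x-1/2)^{-2n}=-2n\int_{x-1/2}^{x+1/2}u^{-2n-1}\,\mathd u$, one checks that
\[
  S_N(x+1)-S_N(x) = \int_{x-1/2}^{x+1/2}\Bigl(\frac 1u + \sum_{n=1}^N B_{2n}(1/2)\,u^{-2n-1}\Bigr)\mathd u .
\]
Since the subtracted $1/x$ is exactly the midpoint value of $f(u)=1/u$ on $[x-1/2,x+1/2]$, and since $B_{2n}(1/2)\int_{x-1/2}^{x+1/2}u^{-2n-1}\mathd u = \tfrac{B_{2n}(1/2)}{(2n)!}\bigl(f^{(2n-1)}(x+1/2)-f^{(2n-1)}(x-1/2)\bigr)$, the quantity $\Delta_N(x)$ is precisely the remainder after $N$ correction terms in the \emph{midpoint} form of the Euler--Maclaurin formula of Theorem~\ref{th:EM} applied to $f(u)=1/u$ (the half-integer endpoints make every Bernoulli function appear at the argument $1/2$, and $B_{2n+1}(1/2)=0$ kills the odd terms). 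The key claim is then
\[
  \sgn \Delta_N(x) = (-1)^N \qquad (x>1/2),
\]
which yields $g_N>0$ (a lower bound) for even $N$ and $g_N<0$ (an upper bound) for odd $N$, as asserted. The asymptotic statement then follows for free: for consecutive $N$ the truncations $S_N$ and $S_{N+1}$ lie on opposite sides of $\psi$, so $|\psi(x)-S_N(x)|\le |S_{N+1}(x)-S_N(x)| = \tfrac{|B_{2N+2}(1/2)|}{2N+2}(x-1/2)^{-2N-2}$, which is of smaller order than the last retained term $(x-1/2)^{-2N}$; this is exactly the assertion that \eqref{eq:psi:asy2} is an asymptotic expansion.

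The main obstacle is the sign claim $\sgn\Delta_N(x)=(-1)^N$. The case $N=0$ is just the convexity estimate of Example~\ref{eg:lower}, and a naive telescoping shows only that the increments $\Delta_N-\Delta_{N-1}$ alternate in sign, which is not enough because the underlying series diverges for fixed $x$. The honest route is to produce the midpoint Euler--Maclaurin remainder as a single integral $\tfrac{1}{(2N+1)!}\int_{x-1/2}^{x+1/2} \overline{B}(u)\,f^{(2N+1)}(u)\,\mathd u$ over one unit interval (with $\overline{B}$ the relevant centered Bernoulli function, so that the identity holds for all real $x$ rather than only integer $x$) and to show that, because $f(u)=1/u$ is completely monotonic, this remainder has the same sign as the first neglected correction term. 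A short computation identifies that sign as $(-1)^N$, using $\sgn B_{2n}(1/2)=(-1)^n$ together with the monotonicity of the odd-order derivatives of $1/u$.
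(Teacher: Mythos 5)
Your proposal is correct and is essentially the paper's own argument: the paper likewise applies the Euler--Maclaurin formula (Theorem~\ref{th:EM}) to the one-term sum $1/x=\sum_{-1/2<n\le 1/2}1/(x+n)$ with $f(t)=1/(x+t)$ on $[-1/2,1/2]$ --- which is exactly your centered ``midpoint'' form valid for all real $x$ --- and then telescopes the resulting one-step inequality using $\psi(x)=\log x+o(1)$, just as in your reduction $g_N(x)=\sum_{m\ge 0}\Delta_N(x+m)$; your final observation that the straddling truncations yield the asymptotic expansion is also the paper's (implicit) argument, made explicit later in Corollary~\ref{cor:bounds}. The sign claim $\sgn\Delta_N(x)=(-1)^N$, which you leave as a sketch, is carried out in the paper along precisely the lines you indicate --- fold the remainder integral about $t=0$ using $B_{2N}(1+t)=B_{2N}(-t)$ and integrate by parts to pass to the kernel $B_{2N+1}$ (equivalent to your $K=2N+1$ form, since $B_{2N+1}(0)=B_{2N+1}(1/2)=0$), then use $\sgn B_{2N+1}(t)=(-1)^{N+1}$ on $(0,1/2)$ together with the positivity of $(x-t)^{-2N-2}-(x+t)^{-2N-2}$ --- and note that it is this constant-sign property of $B_{2N+1}$ on the half-interval, not complete monotonicity of $1/u$ alone, that makes your ``same sign as the first neglected term'' step go through, because the centered kernel itself changes sign at the midpoint.
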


\begin{proof}
  Take $f(t) := 1/(x+t)$, $a = -1/2$, $b = 1/2$ in Theorem~\ref{th:EM}. 
  Because $B_k(1/2) = 0$ for all odd $k$, we restrict the series to
  even indices.  Starting with the one-term (!) sum
\[
\frac 1x = \!\! \sum_{-1/2 < n \le 1/2} \frac 1{x+n}\,,
\]
we have
\[
\frac 1x= \int_{-1/2}^{1/2} \frac{\mathd t}{x+t} 
-\sum_{n=1}^N \frac{B_{2n}(1/2)}{2n} \,\Big(\frac 1{(x+1/2)^{2n}} -
\frac 1{(x-1/2)^{2n}}\Big) + R_N,
\]
where
\[
R_N := -\int_{-1/2}^{1/2}B_{2N}(\{t\})\frac{\mathd t}{(x+t)^{2N+1}}\,.
\]

For the claimed inequalities, we establish the sign of $R_N$ 
with the aid of several properties of Bernoulli functions, all of
which are given in \cite{MV}.  For $-1/2 < t <0$,
\[
B_{2N}(\{t\}) = B_{2N}(1+t) =  B_{2N}(-t).
\]
Thus, we have
\[
R_N = -\int_0^{1/2}\!\!B_{2N}(t) \Big(\frac{1}{(x-t)^{2N+1}}
+\frac{1}{(x+t)^{2N+1}}\Big)\,\mathd t\,.
\]
Next,
\[
B_{2N+1}(y) = (2N+1)\!\int_0^y B_{2N}(t)\, \mathd t.
\]
Integrating by parts and noting that the integrated term vanishes
because $B_{2N+1}(1/2) = B_{2N+1}(0) = 0$, we find
\[
R_N = \int_0^{1/2}\!\!B_{2N+1}(t) \Big(\frac{1}{(x-t)^{2N+2}}
-\frac{1}{(x+t)^{2N+2}}\Big)\,\mathd t\,.
\]
For $0 < t< 1/2, \ \sgn(B_{2N+1}(t)) = (-1)^{N+1}$.  Also, in this range,
\[
(x-t)^{-2N-2} - (x+t)^{-2N-2} >0
\] 
by monotonicity.  Thus $R_N < 0$ for $N$ even and $R_N > 0$ for $N$
odd.

Setting
\begin{equation} \label{eq:F_N}
F_N(x) := \log(x-1/2) -  \sum_{n=1}^N \frac{B_{2n}(1/2)}{2n}\,(x-1/2)^{-2n},
\end{equation}
we find
\[
\psi(x+1) -  \psi(x) = \frac 1x  =  F_N(x+1) - F_N(x) + R_N.
\]
Thus
\[
\psi(x)  - F_N(x) > \psi(x+1) - F_N(x+1)
\]
for $N$ even (resp. $<0$ for $N$ odd).

Assuming that $N$ is even, we proceed as in Example~\ref{eg:lower}.  We
find inductively that, for all $x > 1/2$,
\[
\psi(x)  - F_N(x) > \liminf_{y \to \infty} (\psi(y)  - F_N(y)) = 0,
\]
which proves the lower bound for $\psi(x)$.  An analogous argument shows
that $\psi(x) - F_N(x) < 0$ for $N$ odd. 
\end{proof}

\begin{example}\label{eg:stirling}
  Integrating formula \eqref{eq:psi:asy2} 
  termwise, we find, for a suitable constant $C$, 
  \begin{align*}
  \log \Gamma(x) \sim C + &(x-1/2)\log(x-1/2) - (x - 1/2) \\
  &+ \sum_{n=1}^\infty\frac{B_{2n}(1/2)}{2n(2n-1)}\,(x-1/2)^{1-2n}.
  \end{align*}
  We next exponentiate and then determine $C$ by the well-known device
  of using the duplication formula for the gamma function to connect
  $\Gamma(2x)$ with $\Gamma(x+1/2)$ and $\Gamma(x)$.  Letting $x \to
  \infty$, the terms in the sum go to zero, and we find $C = \log
  \sqrt{2\pi}$.

  As a consequence of Theorem~\ref{thm:norlund}, we obtain
  that, for any positive integer $N$ and all $x>1/2$,
  \begin{align}\label{eq:gamma:bounds}
    \Gamma_{2N-1}(x) < \Gamma(x) < \Gamma_{2N}(x),
  \end{align}
  where
  \begin{align*}
  \Gamma_N(x) &= \sqrt{2\pi}\, (x -1/2)^{x-1/2}\, e^{-(x - 1/2)} \\
  &\qquad \times
  \exp\bigg\{\sum_{n=1}^{N}\frac{B_{2n}(1/2)}{2n(2n-1)}
  \,(x-1/2)^{1-2n} \bigg\}.
  \end{align*}
  These bounds for the gamma function are given by N\"orlund \cite[\S 58,
  p.~111]{No}, who attributes them to Sonin.

  Note that the inequalities in \eqref{eq:gamma:bounds} are opposite from those of
  Theorem~\ref{thm:norlund}, because the integration has changed the signs in
  the summations. 
\end{example}

\section{Generalized bounds for the digamma function}\label{sec:psi:x}

Let $\lambda \in [0,\,1]$.
As $x \rightarrow \infty$, we have also the asymptotic expansion
\begin{equation}
  \psi (x) \sim \log (x - \lambda) - \sum_{n = 1}^{\infty} (- 1)^n \frac{B_n
  (\lambda)}{n} ( x - \lambda)^{- n} . \label{eq:psi:a:asy}
\end{equation}
This generalization of the formula \eqref{eq:psi:asy2} in Theorem~\ref{thm:norlund}  
can be derived by our method. It appears also, e.g., in \cite[\S2.11, p.~33]{Lu}.
Note that here, except at $\lambda = 0, 1 / 2, 1$,
the Bernoulli functions with odd indices are nonzero. For $\lambda =
0$, $B_n ( 0) = B_n$, the $n$th Bernoulli number, and \eqref{eq:psi:a:asy} 
becomes the well-known asymptotic expansion \cite[6.3.18]{AS}
\begin{equation}
  \psi (x) \sim \log x - \frac{1}{2 x} - \sum_{n = 1}^{\infty} \frac{B_{2
  n}}{2 n} x^{- 2 n} . \label{eq:classical-psi}
\end{equation}
At first glance, this formula looks very similar to the expansion \eqref{eq:psi:asy2}.
However, in addition to having an extra term and a
different center of expansion, the coefficients of the last series are
Bernoulli numbers rather than Bernoulli functions evaluated at 1/2; the
connection between these is given by the formula $B_n  (1 / 2) = - (1 - 2^{1 -
n}) B_n$ for $n = 0, 1, \ldots$ \cite[23.1.21]{AS}.

Denote the truncation of the asymptotic series in
\eqref{eq:psi:a:asy} at $n = N$ by
\begin{equation}\label{eq:FN}
  F_N ( \lambda ; x) := \log (x - \lambda) - \sum_{n = 1}^N (- 1)^n \frac{B_n
   (\lambda)}{n} ( x - \lambda)^{- n} .
\end{equation}
The following generalization of Theorem~\ref{thm:norlund} specifies
conditions under which these approximations to $\psi ( x)$ provide
lower or upper bounds that are valid for all $x > \lambda$.
As in the case of our main result, Theorem~\ref{thm:main}, which is a natural
extension of these bounds, the restriction to the case $\lambda\in[0,1/2]$ is
made for expository reasons.

\begin{theorem}
  \label{thm:psi:bounds:x}Let $N\ge1$. Let $\lambda_0$ be the unique root of $B_N (
  \lambda)$ in $[ 0, 1 / 2]$ if $N$ is even, and the unique root of $B_{N + 1}
  ( \lambda)$ in $[ 0, 1 / 2]$ if $N$ is odd.
  \begin{itemize}
    \item Suppose that $N \equiv 1$ modulo $4$ and $\lambda \in [ \lambda_0, 1
    / 2]$. Then, we have $\psi ( x) > F_N ( \lambda ; x)$ for all $x >
    \lambda$. If $N \equiv 3$ modulo $4$, then the inequality for $\psi$ is
    reversed, that is, $\psi ( x) < F_N ( \lambda ; x)$.
    
    \item Suppose that $N \equiv 2$ modulo $4$ and $\lambda \in [ 0,
    \lambda_0]$. Then, we have $\psi ( x) > F_N ( \lambda ; x)$ for all $x >
    \lambda$. If $N \equiv 0$ modulo $4$, then the inequality for $\psi$ is
    reversed.
  \end{itemize}
\end{theorem}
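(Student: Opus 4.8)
The plan is to follow the three-step template of Theorem~\ref{thm:norlund} — convert the functional equation into a one-step inequality via Euler--Maclaurin, then run the backward induction of Example~\ref{eg:lower} — but now carrying the asymmetric center $\lambda$ through every computation. First I would apply Theorem~\ref{th:EM} to $f(t)\assign 1/(x+t)$ on $[a,b]=[-\lambda,1-\lambda]$ with $K=N$; since $0$ is the only integer in $(-\lambda,1-\lambda]$, the left-hand side collapses to the single term $1/x$. Using $\{-\lambda\}=\{1-\lambda\}=1-\lambda$ together with the reflection $B_k(1-\lambda)=(-1)^kB_k(\lambda)$, the Euler--Maclaurin boundary terms assemble exactly into the increments of the truncation \eqref{eq:FN}, giving
\[
  \frac1x = F_N(\lambda;x+1)-F_N(\lambda;x)+R_N,\qquad
  R_N = -\int_{-\lambda}^{1-\lambda}B_N(\{t\})(x+t)^{-N-1}\,\mathd t.
\]
Combined with $\psi(x+1)-\psi(x)=1/x$, this yields $\psi(x)-F_N(\lambda;x)=\bigl(\psi(x+1)-F_N(\lambda;x+1)\bigr)-R_N$, so once the sign of $R_N$ is known the backward induction — together with $\psi(y)=\log y+o(1)$ from \eqref{eq:psi-log} and $\log y-\log(y-\lambda)\to0$, which force $\psi(y)-F_N(\lambda;y)\to0$ — delivers the one-sided bound. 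Sorting the resulting sign against the parity then produces the four residue classes modulo $4$.

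The entire difficulty is therefore the sign of $R_N$, and here the symmetry that let N\"orlund fold the integral onto $(0,1/2)$ and integrate by parts with vanishing endpoints is gone. My remedy is to integrate by parts twice. Writing $(x+1-\lambda)^{-N-1}-(x-\lambda)^{-N-1}=-(N+1)\int_{-\lambda}^{1-\lambda}(x+t)^{-N-2}\,\mathd t$ and using that $B_{N+1}(\{t\})$ is continuous across the integer $0$ for $N\ge1$, one integration by parts turns $R_N$ into the single clean integral
\[
  R_N=\int_{-\lambda}^{1-\lambda}\bigl[(-1)^{N+1}B_{N+1}(\lambda)-B_{N+1}(\{t\})\bigr](x+t)^{-N-2}\,\mathd t.
\]
Let $\phi(s)\assign(-1)^{N+1}B_{N+1}(\lambda)-B_{N+1}(s)$ and $\Pi(t)\assign\int_{-\lambda}^{t}\phi(\{r\})\,\mathd r$; since $\phi(\{r\})$ integrates to $(-1)^{N+1}B_{N+1}(\lambda)$ over the full period, a second integration by parts gives
\[
  R_N=\Pi(1-\lambda)(x+1-\lambda)^{-N-2}+(N+2)\!\int_{-\lambda}^{1-\lambda}\!\Pi(t)(x+t)^{-N-3}\,\mathd t,\quad \Pi(1-\lambda)=(-1)^{N+1}B_{N+1}(\lambda).
\]
The kernels $(x+t)^{-N-2}$ and $(x+t)^{-N-3}$ are positive, so $R_N$ inherits a strict sign as soon as $\Pi$ is shown to be single-signed, namely $\sgn R_N=\sgn\bigl((-1)^{N+1}B_{N+1}(\lambda)\bigr)$.

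The crux, which I expect to be the main obstacle, is the single-signedness of $\Pi$, and this is exactly where the hypotheses on $\lambda$ relative to $\lambda_0$ enter. The mechanism I would prove is that $\phi(\{t\})$ changes sign exactly once on $(-\lambda,1-\lambda)$ (it vanishes at both endpoints), so that $\Pi$, which starts at $\Pi(-\lambda)=0$, first moves monotonically away from $0$ and then monotonically back toward its terminal value $\Pi(1-\lambda)=(-1)^{N+1}B_{N+1}(\lambda)$; consequently $\Pi$ keeps one sign precisely when $\Pi(1-\lambda)$ carries the same sign as the initial excursion. The parity of $N$ splits the verification. For $N$ odd, $B_{N+1}$ has even index and is monotone on $[0,1/2]$, so $\phi(\{t\})$ automatically flips sign only at $t=\lambda$; what the hypothesis $\lambda\ge\lambda_0$ (with $\lambda_0$ the root of $B_{N+1}$) buys is that $B_{N+1}(\lambda)$ takes its central sign, which is exactly the terminal sign needed for $\Pi$ to avoid a second crossing. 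For $N$ even, $B_{N+1}$ has odd index and forms a single bump on $[0,1/2]$ peaking at the root $\lambda_0$ of $B_N=B_{N+1}'/(N+1)$; here the terminal sign is automatic, and it is the hypothesis $\lambda\le\lambda_0$ that keeps $B_{N+1}$ increasing on $[0,\lambda]$ and thereby limits $\phi(\{t\})$ to a single sign change. In both cases the boundary value $\Pi(1-\lambda)$ and the sign of $\Pi$ agree, $R_N$ is strictly single-signed, and the backward induction of the first paragraph closes the argument; the case $\lambda\in[1/2,1]$ would follow by the analogous reflection.
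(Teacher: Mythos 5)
Your proposal is correct in substance, and while its skeleton (Euler--Maclaurin applied to the one-term sum, then backward induction driven by \eqref{eq:psi-log}) coincides with the paper's --- your first display is exactly Proposition~\ref{prop:FN:R} up to the sign convention for $R_N$ --- your treatment of the crux, the sign of $R_N$, is genuinely different. The paper uses two separate devices: for odd $N$ it folds the integral onto $[0,\lambda]$ and $[\lambda,1/2]$ via the reflection $B_N(1+t)=-B_N(-t)$, integrates by parts once on each piece with the antiderivative normalized as $B_{N+1}(t)-B_{N+1}(\lambda)$ so that boundary terms vanish, and compares kernels against $2/x^{N+2}$ to reach $R_N>-B_{N+1}(\lambda)/x^{N+2}\geq 0$; for even $N$ it uses a secant-line convexity trick, comparing $(x+t)^{-N-1}$ with the chord through $t=\lambda_0$ and $t=1-\lambda_0$ and reducing to $m\int t\,B_N(\{t\})\,\mathd t$ with $m<0$. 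You instead integrate by parts twice over the whole period with no folding, and both of your integration-by-parts identities check out (the continuity of $B_{N+1}(\{t\})$ across $t=0$ holds for $N\geq1$, and $\Pi(1-\lambda)=(-1)^{N+1}B_{N+1}(\lambda)$ is right). This buys a uniform mechanism for all four residue classes --- single-signedness of one accumulation function $\Pi$ --- resting on the same monotonicity/sign facts about Bernoulli polynomials from \cite{MV} that the paper invokes. A further payoff: your final representation has kernels $(x+t)^{-N-2}$ and $(x+t)^{-N-3}$ whose exponents play no structural role, so the identical computation with $N+1$ replaced by $N+n$ immediately yields the positivity of $\int_{-\lambda}^{1-\lambda}B_N(\{t\})(x+t)^{-N-n}\,\mathd t$ needed in the proof of Theorem~\ref{thm:main}, where the paper only asserts that its argument ``extends in a straightforward manner.''

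Two repairs are needed when you write out the crux. First, in the even case the hypothesis $\lambda\leq\lambda_0$ does \emph{not} act by ``limiting $\phi(\{t\})$ to a single sign change'': one can check that $\phi$ has exactly one sign change on $(-\lambda,1-\lambda)$ even when $\lambda>\lambda_0$. What $\lambda\leq\lambda_0$ actually controls is the \emph{direction} of the initial excursion of $\Pi$: the sign of $\phi$ just to the right of $t=-\lambda$ is $\sgn\bigl(-B_{N+1}'(1-\lambda)\bigr)=\sgn\bigl(-B_N(1-\lambda)\bigr)$, and $\lambda\leq\lambda_0$ forces this to agree with the (automatic) terminal sign $\sgn\Pi(1-\lambda)$; for $\lambda>\lambda_0$ the excursion starts upward while the terminal value is negative, so $\Pi$ crosses zero --- your own stated criterion (``$\Pi$ keeps one sign precisely when $\Pi(1-\lambda)$ carries the same sign as the initial excursion'') is the correct one, but the gloss misattributes where the hypothesis enters. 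Second, the summary $\sgn R_N=\sgn\bigl((-1)^{N+1}B_{N+1}(\lambda)\bigr)$ degenerates at admissible boundary values where $B_{N+1}(\lambda)=0$ (e.g.\ $\lambda=\lambda_0$ for odd $N$, or $\lambda=0$ for even $N$), yet the theorem's inequalities are strict there; strictness survives because $\Pi$ is strictly signed on the interior of $(-\lambda,1-\lambda)$ and the kernel $(x+t)^{-N-3}$ is positive, so state the conclusion through your displayed representation of $R_N$ rather than through that sign formula. (Similarly, at $\lambda=1/2$ with $N$ odd, $\phi$ has \emph{no} interior sign change, so ``exactly once'' should read ``at most once''; $\Pi$ is then monotone and the argument still closes.)
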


We note that $\lambda_0<1/4$ and that $\lambda_0\to1/4$ very rapidly
as $N$ increases; see Example~\ref{eg:psi:14}. 

In preparation for the proof of Theorem~\ref{thm:psi:bounds:x}, we deduce the
following approximate version of the functional equation $\psi ( x + 1) - \psi
( x) = 1 / x$.

\begin{proposition}
  \label{prop:FN:R}Let $N\ge1$. Then
  \begin{equation*}
    F_N ( \lambda ; x + 1) - F_N ( \lambda ; x) = \frac{1}{x} + \int_{-
     \lambda}^{1 - \lambda} \frac{B_N ( \{ t \})}{(x + t)^{N + 1}} \mathd t.
  \end{equation*}
\end{proposition}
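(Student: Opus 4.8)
The plan is to apply the Euler--Maclaurin formula of Theorem~\ref{th:EM} to the function $f(t) := (x+t)^{-1}$ on the interval $[a,b] = [-\lambda,\,1-\lambda]$ with $K=N$, and then to recognize the main terms as $F_N(\lambda;x+1)-F_N(\lambda;x)$ and the remainder as the asserted integral. Since $x>\lambda$ gives $x+t \ge x-\lambda>0$ throughout $[a,b]$, the function $f$ is $C^\infty$ there and the hypotheses of Theorem~\ref{th:EM} are satisfied. First I would record the derivatives $f^{(k)}(t) = (-1)^k k!\,(x+t)^{-k-1}$, so that $f^{(k-1)}(b) = (-1)^{k-1}(k-1)!\,((x+1)-\lambda)^{-k}$ and $f^{(k-1)}(a) = (-1)^{k-1}(k-1)!\,(x-\lambda)^{-k}$.

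The bookkeeping on the two ``main'' contributions is where the structure of $F_N$ emerges. For $\lambda\in(0,1)$ the only integer in $(-\lambda,\,1-\lambda]$ is $n=0$, so the left-hand side $\sum_{-\lambda<n\le 1-\lambda}f(n)$ equals $f(0)=1/x$. The integral term is $\int_{-\lambda}^{1-\lambda}(x+t)^{-1}\,\mathd t = \log((x+1)-\lambda)-\log(x-\lambda)$, which supplies the two logarithms in $F_N(\lambda;x+1)-F_N(\lambda;x)$. For the boundary sum I would use the key observation that, for $\lambda\in(0,1)$, $\{a\}=\{-\lambda\}=1-\lambda$ and $\{b\}=\{1-\lambda\}=1-\lambda$ coincide, so that both Bernoulli factors are $B_k(1-\lambda)$; the reflection formula $B_k(1-\lambda)=(-1)^kB_k(\lambda)$ then converts them to $B_k(\lambda)$. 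A short simplification shows that the $k$-th boundary term equals $-(-1)^k\frac{B_k(\lambda)}{k}\bigl(((x+1)-\lambda)^{-k}-(x-\lambda)^{-k}\bigr)$, whence the integral term together with the full boundary sum reproduces exactly $F_N(\lambda;x+1)-F_N(\lambda;x)$ as defined in \eqref{eq:FN}.

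It then remains to identify the remainder. Because $f^{(N)}(t)=(-1)^N N!\,(x+t)^{-N-1}$, the prefactor $(-1)^N/N!$ in Theorem~\ref{th:EM} cancels and the remainder collapses to $-\int_{-\lambda}^{1-\lambda}B_N(\{t\})(x+t)^{-N-1}\,\mathd t$. Assembling the pieces, Euler--Maclaurin reads
\[
  \frac1x \;=\; \bigl(F_N(\lambda;x+1)-F_N(\lambda;x)\bigr)-\int_{-\lambda}^{1-\lambda}\frac{B_N(\{t\})}{(x+t)^{N+1}}\,\mathd t,
\]
which rearranges at once to the claimed identity.

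The endpoints $\lambda\in\{0,1\}$ require separate comment, and this is where I expect the only real friction: there the fractional parts $\{a\},\{b\}$ degenerate (for $\lambda=0$ one gets $\{a\}=\{b\}=0$, and the unique integer in the summation range shifts to $n=1$, so the left-hand side becomes $f(1)=1/(x+1)$ rather than $1/x$), and the clean reflection step no longer applies verbatim. Rather than redo the computation in these cases, I would observe that for each fixed admissible $x$ both sides of the asserted identity are continuous in $\lambda$ as $\lambda$ approaches $0$ or $1$ from within $(0,1)$ — the right-hand integral because $\{t\}$ agrees with its one-sided limits off the single point $t=0$. Hence the identity, once established on the open interval, extends to the endpoints by continuity.
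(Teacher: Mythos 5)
Your proof is correct and takes essentially the same route as the paper: apply Theorem~\ref{th:EM} to $f(t)=1/(x+t)$ on $[-\lambda,\,1-\lambda]$ with the one-term sum $1/x$, then use the reflection formula $B_k(1-\lambda)=(-1)^k B_k(\lambda)$ to assemble $F_N(\lambda;x+1)-F_N(\lambda;x)$, leaving the stated integral as the Euler--Maclaurin remainder. Your explicit continuity argument at the endpoints $\lambda\in\{0,1\}$ is a careful extra touch that the paper silently glosses over (its one-term sum $\sum_{-\lambda<n\le 1-\lambda}1/(x+n)$ indeed degenerates at $\lambda=0$), but the proof is otherwise the same.
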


\begin{proof}
  As in the case $\lambda = 1 / 2$, we set $f (t) := 1 / (x + t)$ and apply
  the Euler--Maclaurin summation formula to the one-term sum
  \begin{align*}
    \frac{1}{x} = \sum_{- \lambda < n \leq 1 - \lambda} \frac{1}{x + n} .
  \end{align*}
  We find
  \begin{align*}
    \frac{1}{x} &= \int_{- \lambda}^{1 - \lambda} \frac{1}{x + t} \mathd x -
     \sum_{n = 1}^N \frac{1}{n} \left( \frac{B_n ( 1 - \lambda)}{(x + 1 -
     \lambda)^n} - \frac{B_n ( 1 - \lambda)}{(x - \lambda)^n} \right) \\
     &\quad- \int_{- \lambda}^{1 - \lambda} \frac{B_N ( \{ t \})}{(x + t)^{N + 1}}
     \mathd t
  \end{align*}
  and it remains only to note that $B_n ( 1 - \lambda) = ( - 1)^n B_n (\lambda)$.
\end{proof}

\begin{proof}[Proof of Theorem~\ref{thm:psi:bounds:x}]
  First, assume that $N \equiv 1$
  modulo $4$ and that $\lambda \in [ \lambda_0, 1 / 2]$. In light of
  Proposition~\ref{prop:FN:R} and the backward induction already used in the
  case $\lambda = 1 / 2$, it suffices to show that
  \begin{equation*}
    R_N \assign \int_{- \lambda}^{1 - \lambda} \frac{B_N ( \{ t \})}{(x +
     t)^{N + 1}} \mathd t > 0.
  \end{equation*}
  To determine the sign, we split this integral as
  \begin{equation*}
    R_N = R' + R'' \assign \int_{- \lambda}^{\lambda} \frac{B_N ( \{ t
     \})}{(x + t)^{N + 1}} \mathd t + \int_{\lambda}^{1 - \lambda} \frac{B_N (
     \{ t \})}{(x + t)^{N + 1}} \mathd t.
  \end{equation*}
  For $t \in ( - \lambda, 0)$, we have $B_N ( \{ t \}) = B_N ( 1 + t) = - B_N
  ( - t)$ since $N$ is odd, and hence
  \begin{align*}
    R' & = - \int_{- \lambda}^0 \frac{B_N ( - t)}{(x + t)^{N + 1}} \mathd t
    + \int_0^{\lambda} \frac{B_N ( t)}{(x + t)^{N + 1}} \mathd t\\
    & = - \int_0^{\lambda} B_N ( t) \left( \frac{1}{(x - t)^{N + 1}} -
    \frac{1}{(x + t)^{N + 1}} \right) \mathd t.
  \end{align*}
  Integrating by parts and observing that the boundary terms vanish, we find
  \begin{equation*}
    R' = \int_0^{\lambda} ( B_{N + 1} ( t) - B_{N + 1} ( \lambda)) \left(
     \frac{1}{(x - t)^{N + 2}} + \frac{1}{(x + t)^{N + 2}} \right) \mathd t.
  \end{equation*}
  Likewise, splitting $R''$ according to $\int_{\lambda}^{1 - \lambda} =
  \int_{\lambda}^{1 / 2} + \int_{1 / 2}^{1 - \lambda}$, then integrating by
  parts, we obtain the counterpart
  \begin{equation*}
    R'' = \int_{\lambda}^{1 / 2} ( B_{N + 1}  ( t) - B_{N + 1} ( \lambda))
     \left( \frac{1}{( x + t)^{N + 2}} + \frac{1}{( x + 1 - t)^{N + 2}}
     \right) \mathd t.
  \end{equation*}
  In order to bound $R_N = R' + R''$, we determine, in both integrals, the
  signs of each of the two factors. Since $N \equiv 1$ modulo $4$, the
  polynomial $B_{N + 1} ( t)$ is decreasing for $0 \leq t \leq 1 /
  2$ \cite[(B.12)]{MV}. It follows that $B_{N + 1}  ( t) - B_{N + 1} (
  \lambda) \geq 0$ for $0 \leq t \leq \lambda$, and $B_{N + 1} 
  ( t) - B_{N + 1} ( \lambda) \leq 0$ for $\lambda \leq t \leq
  1 / 2$. Since the function
  \begin{equation*}
    \frac{1}{( x - t)^{N + 2}} + \frac{1}{( x + t)^{N + 2}}
  \end{equation*}
  is increasing and positive for $t$ such that $0 \leq t < x$, we find
  that
  \begin{equation*}
    R' > \frac{2}{x^{N + 2}} \int_0^{\lambda} ( B_{N + 1}  ( t) - B_{N + 1}
     ( \lambda)) \mathd t.
  \end{equation*}
  On the other hand, for all $t \in [0,\,1]$,
  \begin{equation*}
    \frac{1}{( x + t)^{N + 2}} + \frac{1}{( x + 1 - t)^{N + 2}} <
     \frac{2}{x^{N + 2}}\,,
  \end{equation*}
  which allows us to bound
  \begin{equation*}
    R'' > \frac{2}{x^{N + 2}} \int_{\lambda}^{1 / 2} ( B_{N + 1}  ( t) - B_{N
     + 1} ( \lambda)) \mathd t.
  \end{equation*}
  Combining the two inequalities, we obtain
  \begin{equation*}
    R_N > \frac{2}{x^{N + 2}} \int_0^{1 / 2} ( B_{N + 1}  ( t) - B_{N + 1} (
     \lambda)) \mathd t = - \frac{B_{N + 1} ( \lambda)}{x^{N + 2}} .
  \end{equation*}
  Since $\lambda_0$ is the zero of $B_{N + 1} ( \lambda)$, which is decreasing
  on $[ 0, 1 / 2]$, and since $\lambda \geq \lambda_0$, we conclude that
  $R_N > 0$. It is straightforward to adjust this proof to cover also the case
  $N \equiv 3$ modulo $4$.
  
  Next, assume that $N \equiv 2$ modulo $4$ and that $\lambda \in [ 0,
  \lambda_0]$. Again, it suffices to show that
  \begin{equation*}
    R_N \assign \int_{- \lambda}^{1 - \lambda} \frac{B_N ( \{ t \})}{(x +
     t)^{N + 1}} \mathd t > 0.
  \end{equation*}
  Let $\ell ( t)$ be the linear function whose graph intersects the graph of
  $1/(x + t)^{N + 1}$ at $t = \lambda_0$ and $t = 1 - \lambda_0$. Since $B_N (
  \{ t \}) < 0$ for $\lambda_0 < t < 1 - \lambda_0$, and $B_N ( \{ t \}) > 0$
  for $\um \lambda_0 < t < \lambda_0$ or $1 - \lambda_0 < t < 1 + \lambda_0$,
  it follows that
  \begin{equation*}
    B_N ( \{ t \}) \left[ \frac{1}{(x + t)^{N + 1}} - \ell ( t) \right]
     \geq 0
  \end{equation*}
  for all $t \in [ - \lambda_0, 1 + \lambda_0]$, with strict inequality if $t
  \neq \lambda_0$ and $t \neq 1 - \lambda_0$. In particular, writing $\ell (
  t) = m t + b$, with $m < 0$, we have
  \begin{equation}
    R_N > \int_{- \lambda}^{1 - \lambda} B_N ( \{ t \}) ( m t + b) \mathd t =
    m \int_{- \lambda}^{1 - \lambda} t B_N ( \{ t \}) \mathd t.
    \label{eq:RN:2}
  \end{equation}
  The last equality is a consequence of the fact that the average value 
  of $B_N(\{t\})$ over a period is zero. Splitting the last integral as 
  $\int_{- \lambda}^{1 - \lambda} = \int_{- \lambda}^0 +
  \int_0^{1 - \lambda}$, we obtain
  \begin{equation*}
    \int_{- \lambda}^{1 - \lambda} t B_N ( \{ t \}) \mathd t = \int_{-
     \lambda}^0 t B_N ( 1 + t) \mathd t + \int_0^{1 - \lambda} t B_N ( t)
     \mathd t.
  \end{equation*}
  After a change of variables in the first integral and the observation
  that $\int_0^1 t B_N ( t) \mathd t=0$, this simplifies to
  \begin{equation*}
    \int_{- \lambda}^{1 - \lambda} t B_N ( \{ t \}) \mathd t = \int_0^1 t B_N
     ( t) \mathd t - \int_{1 - \lambda}^1 B_N ( t) \mathd t = - \int_{1 -
     \lambda}^1 B_N ( t) \mathd t < 0,
  \end{equation*}
  Since $m < 0$, we conclude from \eqref{eq:RN:2} that $R_N > 0$.
  Again, it is straightforward to adjust the proof to cover the case
  when $N \equiv 0$ modulo $4$.
\end{proof}

\begin{example}\label{eg:psi:14}
  The equality of the generating functions
  \begin{align*}
    \sum_{n=0}^\infty B_{2n}\left( \tfrac14 \right) \frac{x^{2n}}{(2n)!}
    = \frac12 \left( \frac{x e^{x/4}}{e^x-1} - \frac{x e^{-x/4}}{e^{-x}-1} \right)
    = \frac12 \frac{x e^{x/4}}{e^{x/2}-1}
    = \sum_{n=0}^\infty \frac{B_n\left( \tfrac12 \right)}{2^n} \frac{x^n}{n!}
  \end{align*}
  implies that $B_{2N} ( 1 / 4) = 2^{- 2N} B_{2N} ( 1 / 2)$. Since
  $\sgn B_{2N}(1/4) = \sgn B_{2N}(1/2)$ and  $B_{2N}(\lambda)$ is monotonic
  on $(0,1/2)$, we conclude 
  that the unique root $\lambda_0$ of the Bernoulli polynomial $B_{2N}
  ( \lambda)$ in $[ 0, 1 / 2]$ satisfies $\lambda_0 < 1 / 4$.
  Theorem~\ref{thm:psi:bounds:x} therefore implies that, for $x > 1 / 4$,
  \begin{align*}
    \psi ( x) & > F_1 ( 1 / 4 ; x) = \log (x - \tfrac{1}{4}) - \frac{1 /
    4}{x - \tfrac{1}{4}}\,,\\
    \psi ( x) & < F_3 ( 1 / 4 ; x) = \log (x - \tfrac{1}{4}) - \frac{1 /
    4}{x - \tfrac{1}{4}} + \frac{1 / 96}{(x - \tfrac{1}{4})^2} + \frac{1 /
    64}{(x - \tfrac{1}{4})^3}\,,
  \end{align*}
  and so on.
  On the other hand, for instance, the difference $\psi(x) -
  F_2(1/4;x)$ changes sign
  at some $x$ in the interval $(1/4,\infty)$. 
\end{example}

\begin{example}
  Applying Theorem~\ref{thm:psi:bounds:x} in the special case $\lambda = 0$,
  we find that truncating the classical asymptotic series
  \eqref{eq:classical-psi} results in strict upper and lower bounds for $\psi
  ( x)$, valid for all $x > 0$. The first two such bounds, namely
  \begin{align*}
    \psi (x) & > \log (x) - \frac{1}{2 x} - \frac{1}{12 x^2}\,,\\
    \psi (x) & < \log (x) - \frac{1}{2 x} - \frac{1}{12 x^2} + \frac{1}{120 x^4}
  \end{align*}
  are proved also in \cite[Theorem~5]{Go}.  That these inequalities 
  hold more generally is implied by \cite[Theorem~8]{Al1}.
\end{example}

\section{Complete monotonicity}\label{sec:scm}

We are now ready to generalize our approach to give a proof of
Theorem~\ref{thm:main}, our main result.
First, we note that, combining previous observations, it is straightforward to
make the error term in the asymptotic expansion \eqref{eq:logG:asy} explicit.
The special case $\lambda = 0$ of \eqref{eq:loggamma:err} appears, for
instance, in \cite[Theorem~D.3.2]{AAR}, and equation \eqref{eq:psi:err} is
given in \cite[\S 53, p.~101, (6)]{No}.

\begin{proposition}
  \label{prop:loggamma:err}Let $N \geq 1$. Then for $x > \lambda$
  \begin{equation}
    \log \Gamma ( x) - L_N ( \lambda ; x) = - \frac{1}{N} \int_{-
    \lambda}^{\infty} \frac{B_N ( \{ t \})}{( x + t)^N} \mathd t.
    \label{eq:loggamma:err}
  \end{equation}
\end{proposition}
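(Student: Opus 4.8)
The plan is to deduce \eqref{eq:loggamma:err} by integrating the analogous explicit formula for $\psi$, relying on the fact that $L_N ( \lambda ; x)$ is an antiderivative of the truncation $F_N ( \lambda ; x)$ defined in \eqref{eq:FN}. The starting observation, which is the content of the phrase ``combining previous observations'' above, is that
\[
  \frac{\mathd}{\mathd x} L_N ( \lambda ; x) = F_N ( \lambda ; x).
\]
First I would verify this by a direct differentiation: the derivative of $( x - \tfrac12) \log ( x - \lambda) - ( x - \lambda)$ equals $\log ( x - \lambda) + ( \lambda - \tfrac12) / ( x - \lambda)$, and since $B_1 ( \lambda) = \lambda - \tfrac12$ this reproduces exactly the $n = 1$ term of $F_N$; differentiating $( x - \lambda)^{1 - n}$ converts the coefficient $B_n ( \lambda) / ( n ( n - 1))$ into $- B_n ( \lambda) / n$, which matches the remaining terms of \eqref{eq:FN}. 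This is the termwise relationship already exploited in Example~\ref{eg:stirling}.

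Next I would make the digamma error term explicit. Telescoping the approximate functional equation of Proposition~\ref{prop:FN:R} by the same backward induction used in Example~\ref{eg:lower}, and using that $\psi ( x) - F_N ( \lambda ; x) \to 0$ as $x \to \infty$ (which follows from \eqref{eq:psi-log} together with $\log ( x - \lambda) = \log x + o ( 1)$), yields
\[
  \psi ( x) - F_N ( \lambda ; x) = \sum_{k = 0}^{\infty} \int_{- \lambda}^{1 - \lambda} \frac{B_N ( \{ t \})}{( x + k + t)^{N + 1}} \mathd t = \int_{- \lambda}^{\infty} \frac{B_N ( \{ t \})}{( x + t)^{N + 1}} \mathd t,
\]
where the periodicity of $B_N ( \{ \cdot \})$ is used to tile $[ - \lambda, \infty)$ by the translated intervals $[ k - \lambda, k + 1 - \lambda]$.

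Combining the two displays gives $\frac{\mathd}{\mathd x} ( \log \Gamma ( x) - L_N ( \lambda ; x)) = \int_{- \lambda}^{\infty} B_N ( \{ t \}) ( x + t)^{- N - 1} \mathd t$, and I would recognize the right-hand side as the $x$-derivative of $- \tfrac1N \int_{- \lambda}^{\infty} B_N ( \{ t \}) ( x + t)^{- N} \mathd t$ (differentiation under the integral sign). Hence the two sides of \eqref{eq:loggamma:err} differ by a constant, which I would fix to $0$ by letting $x \to \infty$: the right-hand integral tends to $0$, and the left-hand side $\log \Gamma ( x) - L_N ( \lambda ; x)$ tends to $0$ by the leading-order Stirling approximation (equivalently, because $L_N$ truncates \eqref{eq:logG:asy}).

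The routine estimates aside, the one delicate point is the case $N = 1$. For $N \geq 2$ every integral in sight converges absolutely, differentiation under the integral sign is immediate, and the bound $\int_{- \lambda}^{\infty} ( x + t)^{- N} \mathd t = O ( ( x - \lambda)^{1 - N})$ makes the vanishing of the remainder transparent. For $N = 1$, however, the integral $\int_{- \lambda}^{\infty} B_1 ( \{ t \}) ( x + t)^{- 1} \mathd t$ in \eqref{eq:loggamma:err} converges only conditionally; there I would integrate by parts, replacing $B_1 ( \{ t \})$ by its bounded periodic antiderivative, to justify both its convergence and its decay to $0$ as $x \to \infty$, as well as the differentiation under the integral sign used above.
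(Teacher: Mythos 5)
Your proposal is correct and follows essentially the same route as the paper's proof: iterate the approximate functional equation of Proposition~\ref{prop:FN:R} to obtain the explicit digamma error formula \eqref{eq:psi:err}, then integrate and fix the constant using $\log\Gamma(x)-L_N(\lambda;x)\to 0$ as $x\to\infty$. Your added details --- the termwise verification that $\frac{\mathd}{\mathd x}L_N(\lambda;x)=F_N(\lambda;x)$ and the integration-by-parts treatment of the conditionally convergent integral when $N=1$ --- are points the paper leaves implicit, and you handle them correctly.
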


\begin{proof}
  This follows by integration, and the fact that $\log \Gamma ( x) - L_N (
  \lambda ; x)$ converges to 0 
  as $x \rightarrow \infty$, if we can show that
  \begin{equation}
    \psi ( x) - F_N ( \lambda ; x) = \int_{- \lambda}^{\infty} \frac{B_N ( \{
    t \})}{( x + t)^{N + 1}} \mathd t, \label{eq:psi:err}
  \end{equation}
  where $F_N ( \lambda ; x)$ is defined as in \eqref{eq:FN}. Note that
  Proposition~\ref{prop:FN:R} can be restated as
  \begin{equation*}
    \psi ( x) - F_N ( \lambda ; x) = \psi ( x + 1) - F_N ( \lambda ; x + 1) +
     \int_{- \lambda}^{1 - \lambda} \frac{B_N ( \{ t \})}{(x + t)^{N + 1}}
     \mathd t.
  \end{equation*}
  Iterating this relation, we find that, for all integers $n \geq 1$,
  \begin{equation*}
    \psi ( x) - F_N ( \lambda ; x) = \psi ( x + n) - F_N ( \lambda ; x + n) +
     \int_{- \lambda}^{n - \lambda} \frac{B_N ( \{ t \})}{(x + t)^{N + 1}}
     \mathd t,
  \end{equation*}
  and equation \eqref{eq:psi:err} follows from taking the limit $n \rightarrow
  \infty$.
\end{proof}

\begin{proof}[Proof of Theorem~\ref{thm:main}]
  Consider, for illustration, the case that $N \equiv 1$ modulo $4$. Under the
  given assumptions, Theorem~\ref{thm:psi:bounds:x} states that $f ( x) = L_N
  ( \lambda ; x) - \log \Gamma ( x)$ satisfies $f' ( x) < 0$. Integrating as
  in Example~\ref{eg:stirling} further shows that $f ( x) > 0$.
  
  We now indicate why, more generally, $( - 1)^n f^{( n)} ( x) > 0$. Note that
  \eqref{eq:loggamma:err} implies that
  \begin{equation*}
    f ( x) - f ( x + 1) = \frac{1}{N} \int_{- \lambda}^{1 - \lambda}
     \frac{B_N ( \{ t \})}{( x + t)^N} \mathd t.
  \end{equation*}
  Differentiating this relation, we observe that
  \begin{equation*}
    f^{( n)} ( x) - f^{( n)} ( x + 1) = ( - 1)^n \frac{( N)_n}{N} \int_{-
     \lambda}^{1 - \lambda} \frac{B_N ( \{ t \})}{(x + t)^{N + n}} \mathd t,
  \end{equation*}
  where $( N)_n = N ( N + 1) \cdots ( N + n - 1) > 0$. By the usual backward
  induction, it follows that $( - 1)^n f^{( n)} ( x) > 0$, if we can show that
  \begin{equation*}
    \int_{- \lambda}^{1 - \lambda} \frac{B_N ( \{ t \})}{(x + t)^{N + n}}
     \mathd t > 0.
  \end{equation*}
  The proof of Theorem~\ref{thm:psi:bounds:x}, which shows this inequality in
  the case $n = 1$, extends in a straightforward manner to show that these
  inequalities hold in general.
\end{proof}

\section{Numerical calculations}\label{sec:calc} 

It is generally infeasible to make a calculation of prescribed accuracy using
an asymptotic expansion.  We can do this for $\psi(x)$ (as well as, more
generally, for $\log\Gamma(x)$ and all of its derivatives), however, because
of the following two special properties.

First, we have explicit error terms for the series \eqref{eq:psi:asy2}.
The corresponding extensions to $\log\Gamma(x)$ and its derivatives (compare
Theorem~\ref{thm:main}) as well as to values of $\lambda$ other than
$\lambda=1/2$ are straightforward.

\begin{corollary} \label{cor:bounds}
Let $F_N(x)$ be defined by \eqref{eq:F_N} and suppose $x > 1/2$. Then
$|\psi(x) - F_N(x)|$ is smaller than the first omitted term of the 
series, and, using the average of consecutive terms,
\[
\left|\psi(x) - \frac{F_N(x) + F_{N+1}(x)}{2}\right| 
\le \frac 12 \,\frac{|B_{2N+2}(1/2)|}{2N+2}\,(x-1/2)^{-2N-2}.
\]\end{corollary}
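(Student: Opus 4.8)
The plan is to deduce both estimates directly from the bracketing already supplied by Theorem~\ref{thm:norlund}, turning the corollary into a statement about a quantity trapped between two consecutive truncations. First I would record that, by the definition \eqref{eq:F_N}, the truncations $F_N$ and $F_{N+1}$ differ by exactly one term of the series,
\[
F_{N+1}(x) - F_N(x) = -\frac{B_{2N+2}(1/2)}{2N+2}\,(x-1/2)^{-2N-2},
\]
so that $|F_{N+1}(x)-F_N(x)|$ equals the absolute value of the first omitted term of \eqref{eq:psi:asy2}.

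Next I would apply Theorem~\ref{thm:norlund} to the two consecutive indices $N$ and $N+1$. Exactly one of them is even and the other odd, so one truncation is a strict lower bound and the other a strict upper bound for $\psi(x)$; hence $\psi(x)$ lies strictly between $F_N(x)$ and $F_{N+1}(x)$ for every $x>1/2$. Because $\psi(x)$ lies inside the interval with endpoints $F_N(x)$ and $F_{N+1}(x)$, its distance to the endpoint $F_N(x)$ is strictly less than the length of that interval, giving
\[
|\psi(x) - F_N(x)| < |F_{N+1}(x) - F_N(x)| = \frac{|B_{2N+2}(1/2)|}{2N+2}\,(x-1/2)^{-2N-2},
\]
which is precisely the assertion that the error is smaller than the first omitted term.

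For the averaged estimate I would invoke the elementary fact that any point of an interval lies within half the interval's length of its midpoint: since $\psi(x)$ lies between $F_N(x)$ and $F_{N+1}(x)$,
\[
\left|\psi(x) - \frac{F_N(x)+F_{N+1}(x)}{2}\right| \le \frac12\,|F_{N+1}(x) - F_N(x)| = \frac12\,\frac{|B_{2N+2}(1/2)|}{2N+2}\,(x-1/2)^{-2N-2},
\]
which is the second displayed inequality; the halving of the error is exactly what the two-sided bracketing buys us over a one-sided bound. I expect no substantive obstacle here, since the argument is purely order-theoretic once Theorem~\ref{thm:norlund} is in hand; the only point demanding care is the bookkeeping, namely verifying that the gap $F_{N+1}-F_N$ matches the first omitted term in magnitude and tracking the parity of $N$ so that Theorem~\ref{thm:norlund} is applied with the correct orientation of the lower and upper bounds.
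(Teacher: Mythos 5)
Your proposal is correct and follows essentially the same route as the paper's own proof: bracket $\psi(x)$ between $F_N(x)$ and $F_{N+1}(x)$ via Theorem~\ref{thm:norlund}, identify $|F_{N+1}(x)-F_N(x)|$ with the first omitted term, and use the interval/midpoint observations to get both bounds. You simply spell out the parity bookkeeping and the explicit gap computation that the paper leaves implicit.
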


\begin{proof}
$\psi(x)$ lies between $F_N(x)$ and $F_{N+1}(x)$.  Thus its distance  
from $F_N(x)$ is smaller than $|F_{N+1}(x)-F_N(x)|$, the absolute
value of the last term of $F_{N+1}(x)$.  Further, the distance of
$\psi(x)$ from the average of the two partial sums is at most half 
their separation.
\end{proof}

Second, each fixed partial sum of an asymptotic series of the
type \eqref{eq:psi:asy2} is more
accurate at larger arguments, and by applying the functional equation
repeatedly, we can approximate $\psi$ at a larger argument.

As an illustration, we find $\psi(1)$ (which equals $-\gamma$) to 20
decimal places of accuracy.  For a number $K$ of our choice, write
\[
\psi(1) = \psi(K) - 1 - 1/2 - 1/3 - \dots - 1/(K-1),
\]
and then approximate $\psi(K)$.  Say we take $K = 100$ and use the
first four terms of the series in \eqref{eq:psi:asy2}.  We find
\[
\log 99.5 + \frac{1/24}{99.5^2} - \frac{7/960}{99.5^4}
+ \frac{31/8064}{99.5^6} - \frac{127/30720}{99.5^8}
\]
as an approximation to $\psi(100)$.  The error is guaranteed to be
smaller than the first omitted term of the series, 
\[
(511/67584) \, 99.5^{-10} < 10^{-22}.
\]
We find $\psi(1) \approx -0.57721566490153286061.$ (The analogous
calculation using the first four terms of the series in
\eqref{eq:classical-psi} yields the same result.)
On the other hand, the most accurate approximation to $\psi(1)$, using the
fixed choice $K=100$, is achieved when we use, roughly, the first $300$
terms of the asymptotic series.  In that case, the error is less than
$10^{-272}$.

\section{The asymptotic formula \eqref{eq:psi-log}}
If we logarithmically differentiate the Weierstrass product formula
for $\Gamma(x)$ \cite[6.1.3]{AS} (unfortunately listed there under
\emph{Euler's} name), we get
\[
\psi(x+1) = -\gamma + \sum_{n=1}^\infty \Big( \frac 1n - \frac 1{n+x} \Big).
\]
From this we see that if $M$ is a positive integer and $M-x = O(1)$,
then $\psi(x) = \psi(M) + O(1/M)$.  To prove \eqref{eq:psi-log}, it
suffices to show
\begin{equation}  \label{eq:psi-log-M}
\psi(M+1) = \log M + O(1/M).
\end{equation}

Let $L$ be a large positive integer and write
\[
S_{LM} :=\sum_{n=1}^{LM} \Big( \frac 1n - \frac 1{n+M} \Big)
\]
and group terms in blocks of length $M$.  We find upon rearranging that
\[
S_{LM} =\sum_{n=1}^M\,  \frac 1n - \sum_{n=LM +1}^{(L+1)M} \frac 1{n} 
= \log M + \gamma + O(1/M)  + O(1/L).
\]
Letting $L \to \infty$, \eqref{eq:psi-log-M} and  hence
\eqref{eq:psi-log} is proved.

\end{document}